\g@addto@macro{\UrlBreaks}{\UrlOrds}
\newenvironment{breakablealgorithm}
  {
   \begin{center}
     \refstepcounter{algorithm}
     \hrule height.8pt depth0pt \kern2pt
     \renewcommand{\caption}[2][\relax]{
       {\raggedright\textbf{\ALG@name~\thealgorithm} ##2\par}%
       \ifx\relax##1\relax 
         \addcontentsline{loa}{algorithm}{\protect\numberline{\thealgorithm}##2}%
       \else 
         \addcontentsline{loa}{algorithm}{\protect\numberline{\thealgorithm}##1}%
       \fi
       \kern2pt\hrule\kern2pt
     }
  }{
     \kern2pt\hrule\relax
   \end{center}
  }
\renewcommand{\algorithmicreturn}[1]{\bgroup\\  ~#1\egroup}
\renewcommand{\algorithmiccomment}[1]{\bgroup\hfill//~#1\egroup}
\theoremstyle{plain} 
\theoremstyle{plain} 
\theoremstyle{remark} \newtheorem{remark}{\textbf{Remark}}
\theoremstyle{plain} \newtheorem{theorem}{\textbf{Theorem}}
\theoremstyle{plain} 
\theoremstyle{plain} 
\theoremstyle{definition}
\newcommand{\pushright}[1]{\ifmeasuring@#1\else\omit\hfill$\displaystyle#1$\fi\ignorespaces}
\newcommand{\pushleft}[1]{\ifmeasuring@#1\else\omit$\displaystyle#1$\hfill\fi\ignorespaces}
\let\@@pmod\pmod
\DeclareRobustCommand{\pmod}{\@ifstar\@pmods\@@pmod}
\def\@pmods#1{\mkern4mu({\operator@font mod}\mkern 6mu#1)}
\def\isdef{\buildrel {\rm def} \over =}
\newcounter{NbTabulare} \setcounter{NbTabulare}{0}
\newcommand{\NewTabulare}[1]{\noindent%
        \refstepcounter{NbTabulare}\arabic{NbTabulare}  \textnormal{#1}%
        }
\title{Evaluating the generalized Buchstab function and revisiting the variance of the distribution of the smallest components of combinatorial objects}
\author{%
Claude Gravel\textsuperscript{1}~and~Daniel Panario\textsuperscript{2}\\\phantom{}\\[-4mm]
\textsuperscript{1}\small{Eaglys Inc.}\\[-1.5mm]\small{Tokyo, Japan}\\[-1.5mm]\small{\texttt{claudegravel1980@gmail.com}}\\[1mm]
\textsuperscript{2}\small{School of Mathematics and Statistics}\\[-1.5mm]\small{Carleton University, Canada}\\[-1.5mm]\small{\texttt{daniel@math.carleton.ca}}
}
\date{}
\begin{document}

\maketitle
\thispagestyle{empty}


\begin{abstract}
Let $n\geq 1$ and $X_{n}$ be the random variable representing the size of the smallest component of a random combinatorial object made of $n$ elements. A combinatorial object could be a permutation, a monic polynomial over a finite field, a surjective map, a graph, and so on. By a random combinatorial object, we mean a combinatorial object that is chosen uniformly at random among all possible combinatorial objects of size $n$. It is understood that a component of a permutation is a cycle, an irreducible factor for a monic polynomial, a connected component for a graph, etc. Combinatorial objects are categorized into parametric classes. In this article, we focus on the exp-log class with parameter $K=1$ (permutations, derangements, polynomials over finite field, etc.) and $K=1/2$ (surjective maps, $2$-regular graphs, etc.) The generalized Buchstab function $\Omega_{K}$ plays an important role in evaluating probabilistic and statistical quantities. For $K=1$, Theorem $5$ from \cite{PanRic_2001_small_explog} stipulates that $\mathrm{Var}(X_{n})=C(n+O(n^{-\epsilon}))$ for some $\epsilon>0$ and sufficiently large $n$. We revisit the evaluation of $C=1.3070\ldots$ using different methods: analytic estimation using tools from complex analysis, numerical integration using Taylor expansions, and computation of the exact distributions for $n\leq 4000$ using the recursive nature of the counting problem. In general for any $K$, Theorem $1.1$ from \cite{BenMasPanRic_2003} connects the quantity $1/\Omega_{K}(x)$ for $x\geq 1$ with the asymptotic proportion of $n$-objects with large smallest components. We show how the coefficients of the Taylor expansion of $\Omega_{K}(x)$ for $\lfloor x\rfloor \leq x < \lfloor x\rfloor+1$ depends on those for $\lfloor x\rfloor-1 \leq x-1 < \lfloor x\rfloor$. We use this family of coefficients to evaluate $\Omega_{K}(x)$.

\noindent\small\textbf{\textit{2020 Mathematics Subject Classification:}} 68R05 Combinatorics in computer science, 05A16 Asymptotic enumeration, 65D30 Numerical integration
\end{abstract}

\section{Introduction}\label{sect_intro}

Let $n\geq 1$ and $X_{n}$ be the random variable representing the size of the smallest component of a random combinatorial object made of $n$ elements. By a random combinatorial object, we mean a combinatorial object that is chosen uniformly at random among all possible combinatorial objects of size $n$. The cardinality of the support of $X_n$ is in principle $n+1$. Since the length of the smallest component cannot be obviously between $\lfloor n/2 \rfloor+1$ and $n-1$ inclusively, the range of $X_{n}$ is therefore $1,2,\ldots, \lfloor n/2 \rfloor$ together with $n$. For some reasons that will become clear hereafter, we add zero probabilities to extend the range of $X_n$ over all integers between $1$ and $n$ inclusively.

Many results pertaining to combinatorial objects and the analytical methods required to understand many of the references in this paper can be found in \cite{FlaSed_BOOK}. Results of Section \ref{sect_approaches} are valid for the class of $n$-objects that contains, permutations, derangements, monic polynomials over a finite fields, just to name a few. Result of Section \ref{sect_gen_bt} applies to all combinatorial objects in the exp-log class. We let readers to consult \cite{FlaSed_BOOK} for the proper definitions of the exp-log class of combinatorial objects.

For beginning, we can take the typical case of permutations or of monic polynomials over finite fields. The latter deserves a special treatment in \cite{MullenPanario2013}. In \cite{PanRic_2001} and \cite{PanRic_2001_small_explog}, local results about the probability distribution of $X_{n}$ and asymptotic results about the $k$-th moment of $X_n$ are given. One of our goals in this paper is to revisit some results concerning the second moment in order to compute the variance of $X_n$, denoted by $\mathrm{Var}(X_{n})$. We recall that, by definition,
\begin{equation}
\mathrm{Var}(X_{n})=\sum_{k=1}^{n}{\big(k-\mathbf{E}(X_{n})\big)^{2}\phantom{\cdot}\mathbf{P}\{X_n = k\}}=\mathbf{E}(X_{n}^{2})-(\mathbf{E}(X_n))^{2},\label{var_defn}
\end{equation}
where $\mathbf{P}\{X_n = k\}$ is the probability that $X_n$ equals $k$, and $\mathbf{E}(X_n)$ is the expectation of $X_{n}$.

The $k$-th moments of $X_n$, that is $\mathbf{E}(X_{n}^{k})$, is expressed as an integral involving the ordinary Buchstab function $\omega$, which is defined over the real interval $[1,\infty)$ by
\begin{equation}
\omega(x)=\frac{1}{x}\quad\text{for $1\leq x\leq 2$}\quad\text{and}\quad \frac{\mathrm{d}(x\omega(x))}{\mathrm{d}x}=\omega(x-1)\quad\text{for $x\geq 2$}.\label{defn_btfct}
\end{equation}
In general as mentioned in \cite{PanRic_2001}, the $k$-th moment of $X_n$ involves the quantity $\int_{1}^{\infty}{t^{-k}\omega(t) \mathrm{d}t}$. Besides the original paper by Buchstab \cite{Buchstab_1937} in which the function is defined and analyzed, there are numerous other papers discussing its various properties and applications such as \cite{Bruijn_1950}. The book \cite{tenenbaum2018introduction} contains many useful properties about the Buchstab function as well as their proofs.

Theorem $5$ from \cite{PanRic_2001_small_explog} stipulates that
\begin{equation}
\mathrm{Var}(X_n)=C\big(n + O(n^{-\epsilon})\big)\quad\text{for some $\epsilon>0$}\label{defn_const}.
\end{equation}
The constant $C$ from (\ref{defn_const}) is given by
\begin{equation}
C = 2\int_{1}^{\infty}{\frac{\omega(t)}{t^{2}}\mathrm{d}t}.\label{C_integral_expr}
\end{equation}
\begin{remark}
We would like to point out that, in \cite{PanRic_1998_onBenOr}, \cite{PanRic_2001}, \cite{PanRic_2001_small_explog}, and also \cite{BenMasPanRic_2003}, the interval of integration in (\ref{C_integral_expr}) starts at $2$. The authors therein just forgot inadvertently to add $3/4$ resulting from the integration over the interval $[1,2)$ when computing the variance. This mistake lead to confusion of some researchers, see~\cite{Finch_2021_1}.
\end{remark}

Let $S_n$ be the set of permutations on $n$ elements, and let $S_{k,n}\subsetneq S_{n}$ be those permutations with smallest cycles of length $k$ for $1\leq k\leq n$. Denote the cardinality of $S_{k,n}$ by $s_{k,n}$. Let $c_{k}=(k-1)!$ for $k\geq 1$, and let $[n/k]=1$ if and only if $k|n$ otherwise $[n/k]=0$. Then, \cite{PanRic_2001} proves that
\begin{align}
s_{k,n}&=\sum_{i=1}^{\lfloor n/k\rfloor}{\frac{c_{k}^{i}}{i!}\frac{n!}{(k!)^{i}(n-ki)!}\sum_{j=k+1}^{n-ki}{s_{j,n-ki}}}+[n/k]\frac{c_{k}^{n/k}}{(n/k)!}\frac{n!}{(k!)^{n/k}}\label{in_paper_formula}\\
&=\sum_{i=1}^{\lfloor n/k\rfloor}{\frac{n!}{k^{i}i!(n-ki)!}\sum_{j=k+1}^{n-ki}{s_{j,n-ki}}}+[n/k]\frac{n!}{(n/k)!{}k^{n/k}}.\label{rec_lin_formula}
\end{align}
In order to simplify the notation from \cite{PanRic_2001} to fit our purpose here, we changed slightly the notation from $L_{k,n}^{s}$ to $s_{k,n}$.

For a fixed $n$, we have at least the following two properties:
\begin{displaymath}
s_{n,n}=(n-1)!,\quad s_{k,n}=0\text{ for $\lfloor n/2\rfloor+1 \leq k \leq n-1$,}\quad\text{and }\sum_{k=1}^{n}{s_{k,n}}=n!
\end{displaymath}
We have for a fixed $n\geq 1$ that
\begin{displaymath}
\mathbf{P}\{X_{n}=k\}=\frac{s_{k,n}}{n!}\quad\text{for $1\leq k\leq n$}.
\end{displaymath}

In Section \ref{sect_approaches}, we evaluate $C$ from (\ref{defn_const}) using different approaches. Another of our goals, pertaining to Section \ref{sect_gen_bt}, is to evaluate the generalized Buchstab\footnote{We thank an anonymous referee to have brought to our attention that the function considered here is not exactly a possible generalization of the original Buchstab because there is no $K$ such that $\Omega_{K}$ coincides with $\omega$ on the interval $[1,2)$.} function with parameter $K>0$ defined by
\begin{equation}
\Omega_{K}(x)=\left\{
\begin{array}{ll}
1 & \text{for $1\leq x <2$,}\\
1+K\int_{2}^{x}{\frac{\Omega_{K}(u-1)}{u-1}\mathrm{d}u} &\text{for $x\geq 2$.}
\end{array}
\right.\label{gen_Buchstab_def}
\end{equation}
The fraction of $n$-objects with large smallest components is given by $1/\Omega_{K}(x)$; more precisely, Theorem $1.1$ from \cite{BenMasPanRic_2003} stipulates that
\begin{equation*}
\lim_{n\to\infty}{\frac{s_{\lfloor xn\rfloor,\lfloor xn\rfloor}}{\sum_{i=n}^{\lfloor xn\rfloor}{s_{\lfloor xn\rfloor,i}}}}=\frac{1}{\Omega_{K}(x)}\quad\text{for $x > 1$}.
\end{equation*}

For the sake of completeness and to gain insight how the Buchstab function connects to combinatorial analysis, we end this introduction by recalling briefly how Buchstab introduced his function $\omega$ when studying the factorization of natural numbers into primes. The primes are like the irreducible factors of a polynomial, or the cycles of a permutation, etc. Let $\xi\in\{1,\ldots,n\}$ with its decomposition into primes given as $p_{1}(\xi)\cdots p_{k}(\xi)=\xi$ such that $p_{1}(\xi) \leq p_{2}(\xi) \leq \ldots \leq p_{k}(\xi)$. We count the number of $\xi$'s with their smallest prime factor less than $m$; in other words, set
\begin{displaymath}
\Psi(n,m)=\mathrm{card}\{\xi\in\{1,\ldots,n\}\colon p_{1}(\xi)\leq m\}.
\end{displaymath}
Then \cite{Buchstab_1937} showed that
\begin{displaymath}
\Psi(n,m)=1+\sum_{p\leq m}{\Psi\bigg(\frac{n}{p},p\bigg)}\quad\text{for all $1<m\leq n$}.
\end{displaymath}
The previous summation is over all primes $p$ less than or equal to $m$. The functional equation given $\Psi$ is connected to another important function, the Dickman function, that we do not discuss here; see \cite{tenenbaum2018introduction} for a detailed analysis of the Dickman function together with the Buchstab function.

\section{Approaches}\label{sect_approaches}

\subsection{Analytic estimation}\label{subsec_analytic_estimation}

In this section, we recall mostly results from \cite{PanRic_1998_onBenOr} and \cite{PanRic_2001_small_explog}. The approach from \cite{PanRic_2001_small_explog} to obtain the limiting quantities for $\mathbf{P}\{X_n\geq k\}$ and $\mathbf{E}(X_{n}^{\ell})$ as $k,n\to\infty$ and $\ell\geq 1$ uses singularity analysis of exponential generating functions for combinatorial objects. For an in-depth coverage of singularity analysis applied to combinatorics, see \cite{FlaSed_BOOK}.

Permutations form a typical class of combinatorial objects that we choose here for our discussion, but the results are not limited only to permutations. The cycles are seen as the irreducible components of a permutation. Let $C(z)=\sum_{i=0}^{\infty}{C_{i}z^{i}/i!}$ be the exponential generating function for counting cycles of given lengths. Then the exponential generating function for counting permutations of given sizes is
\begin{equation*}
L(z)=\exp(C(z))=\sum_{i=0}^{\infty}{L_{i}\frac{z^{i}}{i!}}.
\end{equation*}
For a fixed $n>0$, we are interested in counting permutations with smallest cycles of length at least $k$ for $1\leq k\leq n$. Let $S(z)$ be the generating function for counting permutations with smallest cycles of length at least $k$ for $1\leq k\leq n$. Then we have
\begin{equation*}
S(z)=\exp\bigg(\sum_{i=k}^{\infty}{C_{i}\frac{z^{i}}{i!}}\bigg)-1=\sum_{i=0}^{\infty}{S_{i}\frac{z^{i}}{i!}}
\end{equation*}
Therefore the tail of the probability distribution of $X_n$ is given by
\begin{equation*}
\mathbf{P}\{X_n\geq k\}=\frac{S_{n}}{L_{n}}.
\end{equation*}
Using singularity analysis, \cite{PanRic_2001_small_explog} shows that if $k,n\to\infty$, then
\begin{equation}
\mathbf{P}\{X_n\geq k\}=\frac{1}{k}\omega\bigg(\frac{n}{k}\bigg)+O\bigg(\frac{1}{k^{1+\epsilon}}\bigg)\quad\text{for some $\epsilon>0$.}\label{local_prob_res}
\end{equation}
Theorem \ref{high_order_mom_prop} states the asymptotic behaviour of the moments.
\begin{theorem}\label{high_order_mom_prop}
For some function $h(n)$ that tends slower to infinity than $\log(n)$ and for some $\epsilon>0$ independent of $n$, we have that
\begin{align*}
\mathbf{E}(X_{n})&=e^{-\gamma}\log(n)\bigg(1+O\bigg(\frac{h(n)}{\log(n)}\bigg)\bigg),\\
\mathbf{E}(X_{n}^{\ell})&=\ell{}n^{\ell-1}\bigg(\int_{1}^{\infty}{\frac{\omega(x)}{x^{\ell}}\mathrm{d}x}\bigg)\bigg(1+O\bigg(\frac{1}{n^{\epsilon}}\bigg)\bigg)\quad\text{for integer $\ell\geq 2$.}
\end{align*}
\end{theorem}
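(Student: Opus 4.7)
The plan is to derive both asymptotics from the local tail estimate (\ref{local_prob_res}) via the summation identity
\begin{equation*}
\mathbf{E}(X_{n}^{\ell})=\sum_{k=1}^{n}{\big(k^{\ell}-(k-1)^{\ell}\big)\mathbf{P}\{X_{n}\geq k\}},
\end{equation*}
which for $\ell=1$ collapses to $\mathbf{E}(X_{n})=\sum_{k=1}^{n}{\mathbf{P}\{X_{n}\geq k\}}$. First I would split the summation range into a bounded initial block $1\leq k\leq K_{0}$, a main block $K_{0}<k\leq n/h(n)$, and a tail block $n/h(n)<k\leq n$, where $K_{0}$ is a large constant and $h(n)$ is the slowly growing function in the statement. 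The initial block contributes $O(1)$ using the trivial bound $\mathbf{P}\{X_{n}\geq k\}\leq 1$, and the tail block will be controlled by the boundedness of $\omega$ on $[1,h(n)]$.

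For the case $\ell\geq 2$, I would substitute $\mathbf{P}\{X_{n}\geq k\}=\omega(n/k)/k+O(k^{-1-\epsilon})$ from (\ref{local_prob_res}) and write $k^{\ell}-(k-1)^{\ell}=\ell k^{\ell-1}+O(k^{\ell-2})$, so the main term becomes
\begin{equation*}
\ell\sum_{k=1}^{n}{k^{\ell-2}\omega(n/k)}.
\end{equation*}
Setting $u=k/n$ turns this into a Riemann sum for $\ell n^{\ell-1}\int_{0}^{1}{u^{\ell-2}\omega(1/u)\mathrm{d}u}=\ell n^{\ell-1}\int_{1}^{\infty}{\omega(x)/x^{\ell}\mathrm{d}x}$ after the change of variables $x=1/u$. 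Because $\ell\geq 2$ and $\omega$ is bounded, the integral converges, and the step-size error of the Riemann sum together with the contributions of the error terms (both the $O(k^{-1-\epsilon})$ from the local estimate and the $O(k^{\ell-2})$ from the binomial expansion) are of relative order $n^{-\epsilon'}$ for some $\epsilon'>0$. The hard part here is bookkeeping to verify that the initial and tail blocks indeed give a power-saving relative error, rather than, say, a logarithmic one.

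For $\ell=1$, the corresponding integral $\int_{1}^{\infty}{\omega(x)/x\mathrm{d}x}$ diverges logarithmically, so I would exploit the classical fact $\omega(x)\to e^{-\gamma}$ (with exponentially small error) as $x\to\infty$. Writing $\omega(n/k)=e^{-\gamma}+\big(\omega(n/k)-e^{-\gamma}\big)$ and summing,
\begin{equation*}
\sum_{K_{0}<k\leq n/h(n)}{\frac{1}{k}\omega\!\left(\frac{n}{k}\right)}=e^{-\gamma}\sum_{K_{0}<k\leq n/h(n)}{\frac{1}{k}}+\sum_{K_{0}<k\leq n/h(n)}{\frac{\omega(n/k)-e^{-\gamma}}{k}}.
\end{equation*}
The first sum equals $e^{-\gamma}\big(\log(n)-\log h(n)+O(1)\big)=e^{-\gamma}\log(n)\big(1+O(h(n)/\log n)\big)$, and the second is $O(1)$ because $|\omega(x)-e^{-\gamma}|$ decays super-polynomially for $x\geq h(n)$. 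The tail block $k>n/h(n)$ contributes at most $O(\log h(n))=o(\log n)$, and the initial block is $O(1)$.

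The main obstacle is that the local estimate (\ref{local_prob_res}) is only valid jointly as $k,n\to\infty$, so one must both choose the cutoffs $K_{0}$ and $n/h(n)$ carefully and control the transition regions where either $k$ or $n/k$ is small. A secondary technical difficulty for $\ell=1$ is quantifying the rate $\omega(x)-e^{-\gamma}=O(\exp(-x\log x))$ precisely enough that the error in replacing $\omega$ by its limit is absorbed into the stated $O(h(n)/\log n)$ relative bound; this in turn constrains the admissible growth rate of $h(n)$.
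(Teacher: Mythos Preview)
Your approach is essentially the same as the paper's: start from the tail-summation identity, insert the local estimate (\ref{local_prob_res}), and interpret the resulting sum $\sum k^{\ell-2}\omega(n/k)$ as a Riemann sum for $n^{\ell-1}\int_{1}^{\infty}\omega(x)x^{-\ell}\,\mathrm{d}x$. The paper likewise treats $\ell=1$ by a further subdivision of the upper range and an appeal to $\omega(x)\to e^{-\gamma}$, as you do.

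The one concrete discrepancy is your choice of lower cutoff. You take a fixed constant $K_{0}$, then later flag the obstacle that (\ref{local_prob_res}) is only valid as $k\to\infty$; with $K_{0}$ constant this obstacle is fatal, not merely a bookkeeping nuisance, because you cannot insert the estimate on the main block $K_{0}<k\leq n/h(n)$. The paper resolves exactly this point by taking the lower cutoff to be $\nu(n)=\lfloor n^{\epsilon'}\rfloor$ with $0<\epsilon'<\epsilon$: then the initial block $1\leq k<\nu(n)$ is bounded trivially by $O(\nu(n)^{\ell})=O(n^{\epsilon'\ell})$, which is still a power saving against $n^{\ell-1}$ once $\epsilon'$ is small enough, while on the main block $k\geq\nu(n)\to\infty$ the local estimate applies uniformly with error $O(\nu(n)^{-\epsilon})$. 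If you replace $K_{0}$ by such a $\nu(n)$, your argument goes through with no further change; for $\ell\geq2$ the extra upper cutoff at $n/h(n)$ is then unnecessary, and for $\ell=1$ your decomposition via $\omega(n/k)=e^{-\gamma}+(\omega(n/k)-e^{-\gamma})$ matches the paper's sketch.
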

\begin{proof}
We consider the case when $\ell\geq 2$. We give the main steps for the proof of Theorem \ref{high_order_mom_prop}. By definition, we have
\begin{equation}
\mathbf{E}(X_{n}^{\ell})=\sum_{k=1}^{\infty}{\big(k^{\ell}-(k-1)^{\ell}\big)\mathrm{P}\{X_{n}\geq k\}}.\label{sum_for_high_order_mom}
\end{equation} Let $\nu(n)=\lfloor n^{\epsilon'}\rfloor$ such that $0<\epsilon'<\epsilon$ where $\epsilon$ is given from (\ref{local_prob_res}). Then $\nu(n)\to\infty$ as $n\to\infty$, we split the sum from (\ref{sum_for_high_order_mom}) using $\nu$, and we obtain
\begin{align*}
\mathbf{E}(X_{n}^{\ell})&=\sum_{k=1}^{\nu(n)-1}{\big(k^{\ell}-(k-1)^{\ell}\big)\mathrm{P}\{X_{n}\geq k\}}+\sum_{k=\nu(n)}^{\infty}{\big(k^{\ell}-(k-1)^{\ell}\big)\mathrm{P}\{X_{n}\geq k\}}\\
&\isdef S_{1}+S_{2}.
\end{align*}
Using (\ref{local_prob_res}), and the fact that $\mathbf{P}\{X_{n}\geq n+1\}=0$, we have $S_{1}=O\big((\nu(n))^{\ell-1}\big)$ because $(k^{\ell}-(k-1)^{\ell})\in O(k^{\ell-1})$ and $\mathrm{P}\{X_{n}\geq k\}\in O(1/k^{1+\epsilon})$ in the range $1\leq k <\nu(n)$. In the range $\nu(n)\leq k \leq n$, we have $(k^{\ell}-(k-1)^{\ell})\in O(\ell{}k^{\ell-1})$, and therefore
\begin{align}
S_{2}&=\sum_{k=\nu(n)}^{\infty}{\big(k^{\ell}-(k-1)^{\ell}\big)\mathrm{P}\{X_{n}\geq k\}}\nonumber\\
&=\ell\Bigg(\sum_{k=\nu(n)}^{n}{k^{\ell-2}\omega\bigg(\frac{n}{k}\bigg)}\Bigg)\big(1+O(\nu(n)^{-\epsilon})\big).\label{eq_S2}
\end{align}
The sum within (\ref{eq_S2}) is viewed as a Riemann sum that is estimated by its corresponding integral
\begin{align*}
\sum_{k=\nu(n)}^{n}{k^{\ell-2}\omega\bigg(\frac{n}{k}\bigg)} &= \int_{0}^{n}{t^{\ell-2}\omega\bigg(\frac{n}{t}\bigg)\mathrm{d}t}+O\bigg(\frac{1}{n}\bigg)\\
&=n^{\ell-1}\int_{1}^{\infty}{\frac{\omega(x)}{x^{\ell}}\mathrm{d}x}+O\bigg(\frac{1}{n}\bigg)\quad\text{with $\frac{n}{t} = x$.}
\end{align*}
The proof for the case $\ell=1$ is quite similar, and the range $\nu(n) \leq k\leq n$ is simply divided further into two ranges $\nu(n)\leq k < n\mu(u)$ and $n\mu(n)\leq k\leq n$ where $\mu(n)$ for some well-chosen function $\mu$ as in \cite{PanRic_1998_onBenOr}.
\end{proof}

\begin{remark}
The sum in (\ref{eq_S2}) goes up to $n$ inclusively and not $n/2$; thus the range of integration starts at $1$ and not $2$. Because $\mathbf{P}\{X_n = k\}=0$ for $\lfloor n/2\rfloor + 1\leq k\leq n-1$, we point out as well that
\begin{displaymath}
\mathbf{P}\{X_{n}\geq k\}=\sum_{i=k}^{n}{\mathbf{P}\{X_{n}=i\}}=\mathbf{P}\{X_{n}=n\}\quad\text{for $\lfloor n/2\rfloor + 1\leq k\leq n$}.
\end{displaymath}
\end{remark}

Back to the variance of $X_n$, we have the following theorem that ends our section on the analytical estimation for $\mathrm{Var}(X_{n})/n$ as $n\to\infty$.
\begin{theorem}
For some $\epsilon>0$ independent of $n$, we have that
\begin{displaymath}
\mathrm{Var}(X_{n})=nC\bigg(1+O\bigg(\frac{1}{n^{\epsilon}}\bigg)\bigg)\quad\text{with}\quad C=2\int_{1}^{\infty}{\frac{\omega(x)}{x^{2}}\mathrm{d}x}
\end{displaymath}
\end{theorem}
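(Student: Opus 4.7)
The plan is to read off the variance directly from the definition (\ref{var_defn}) and Theorem~\ref{high_order_mom_prop}, the only work being to check that the square of the expectation is negligible compared to the second moment and its error term.

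First I would apply Theorem~\ref{high_order_mom_prop} with $\ell=2$ to obtain
\begin{equation*}
\mathbf{E}(X_{n}^{2})=2n\left(\int_{1}^{\infty}\frac{\omega(x)}{x^{2}}\,\mathrm{d}x\right)\bigl(1+O(n^{-\epsilon_{0}})\bigr)=nC\bigl(1+O(n^{-\epsilon_{0}})\bigr),
\end{equation*}
where $\epsilon_{0}>0$ is the constant furnished by Theorem~\ref{high_order_mom_prop}. Here I am using the definition $C=2\int_{1}^{\infty}\omega(x)/x^{2}\,\mathrm{d}x$, so that the leading constant in $\mathbf{E}(X_{n}^{2})$ is exactly $C$ and no extra rescaling is needed.

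Next I would apply the $\ell=1$ case of the same theorem to get $\mathbf{E}(X_{n})=e^{-\gamma}\log(n)\bigl(1+O(h(n)/\log(n))\bigr)$, and therefore
\begin{equation*}
\bigl(\mathbf{E}(X_{n})\bigr)^{2}=O\bigl((\log n)^{2}\bigr).
\end{equation*}
Subtracting, (\ref{var_defn}) yields
\begin{equation*}
\mathrm{Var}(X_{n})=\mathbf{E}(X_{n}^{2})-\bigl(\mathbf{E}(X_{n})\bigr)^{2}=nC+O(n^{1-\epsilon_{0}})+O((\log n)^{2}).
\end{equation*}

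The only step requiring a moment of thought is absorbing both error terms into a single power saving of the form $nC\cdot O(n^{-\epsilon})$. For any $0<\delta<1$ the inequality $(\log n)^{2}=O(n^{1-\delta})$ holds for all sufficiently large $n$, so choosing
\begin{equation*}
\epsilon=\min\{\epsilon_{0},\,1-\delta\}>0
\end{equation*}
(independent of $n$) makes both remainders of size $O(n^{1-\epsilon})=nC\cdot O(n^{-\epsilon})$, which gives the claimed bound. There is no real obstacle here; the whole statement is essentially a bookkeeping consequence of Theorem~\ref{high_order_mom_prop}. The mild subtlety, such as it is, is simply recognizing that the $(\log n)^{2}$ contribution from $(\mathbf{E}(X_{n}))^{2}$ is strictly smaller than any power saving one can extract from $\mathbf{E}(X_{n}^{2})$, so it does not degrade the exponent $\epsilon$ beyond the constraint $\epsilon<1$.
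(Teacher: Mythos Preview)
Your argument is correct and follows the same route as the paper: write $\mathrm{Var}(X_n)=\mathbf{E}(X_n^2)-(\mathbf{E}(X_n))^2$, use the $\ell=2$ moment asymptotic for the main term, and observe that $(\mathbf{E}(X_n))^2=O((\log n)^2)$ is negligible against $n^{1-\epsilon}$. The only cosmetic difference is that you invoke Theorem~\ref{high_order_mom_prop} directly for $\mathbf{E}(X_n^2)$, whereas the paper re-expands $\mathbf{E}(X_n^2)=\sum_k(2k-1)\mathbf{P}\{X_n\ge k\}$ and passes to the Riemann integral once more; your shortcut is cleaner. (The paper's proof also appends a numerical evaluation of $C$, but that is not needed for the stated theorem.) One small simplification: you do not need the auxiliary $\delta$, since $(\log n)^2=O(n^{1-\epsilon_0})$ already, so $\epsilon=\epsilon_0$ works.
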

\begin{proof}
We have by definition that $\mathrm{Var}(X_{n})=\mathbf{E}(X_{n}^{2})-\big(\mathbf{E}(X_{n})\big)^{2}$. We use (\ref{local_prob_res}) and consider the second moment. Hence we have
\begin{align}
\mathbf{E}(X_{n}^{2})&=\sum_{k=1}^{\infty}{\big(k^{2}-(k-1)^{2}\big)\mathbf{P}\{X_{n}\geq k\}}=\sum_{k=1}^{\infty}{\big(2k-1\big)\mathbf{P}\{X_{n}\geq k\}}\nonumber\\
&= \sum_{k=1}^{n}{\big(2k-1\big)\mathbf{P}\{X_{n}\geq k\}}\nonumber\\
&= \sum_{k=1}^{n}{\bigg(2k-1\bigg)\bigg(\frac{1}{k}\omega\bigg(\frac{n}{k}\bigg)+O\bigg(\frac{1}{k^{1+\epsilon}}\bigg)\bigg)}\quad\text{for some $\epsilon>0$}\nonumber\\
&\sim 2\sum_{k=1}^{n}{\omega\bigg(\frac{n}{k}\bigg)}.\label{var_asympto}
\end{align}
The expression (\ref{var_asympto}) is a Riemann sum and is estimated in a similar way as in Proposition \ref{high_order_mom_prop}. The quantity $\big(\mathbf{E}(X_{n})\big)^{2}$ is negligible compared to $\mathbf{E}(X_{n}^{2})$ as $n\to \infty$. Hence we have that
\begin{displaymath}
\mathrm{Var}(X_{n})\sim 2n\int_{1}^{\infty}{\frac{\omega(x)}{x^{2}}\mathrm{d}x}\quad\text{as $n\to\infty$.}
\end{displaymath}

In \cite{Selberg_1944}, it is shown that $\omega(x)\to e^{-\gamma}$ where $\gamma$ is the Euler-Mascheroni constant. More specifically, it was shown that $|\omega(x)-e^{-\gamma}|<10^{-4}$ for $x>4$. Therefore we have that
\begin{displaymath}
C=2\int_{1}^{\infty}{\frac{\omega(x)}{x^{2}}\mathrm{d}x}=2\int_{1}^{4}{\frac{\omega(x)}{x^{2}}\mathrm{d}x}+2\int_{4}^{\infty}{\frac{e^{-\gamma}}{x^{2}}\mathrm{d}x}+2\int_{4}^{\infty}{\frac{\omega(x)-e^{-\gamma}}{x^{2}}\mathrm{d}x}.
\end{displaymath}
Using the quantities from \cite{PanRic_1998_onBenOr} for
\begin{displaymath}
2\int_{2}^{\infty}{\frac{\omega(x)}{x^{2}}\mathrm{d}x}=0.5586\ldots,
\end{displaymath}
\emph{and}, this time, taking into account the evaluation of the integral over $[1,2]$ that yields exactly $3/4$, we obtain up to four significant figures that $C=1.3068\ldots$, and thus
\begin{equation*}
\frac{\mathrm{Var}(X_{n})}{n}\to 1.3068\ldots\quad\text{as $n\to\infty$.}
\end{equation*}
The proof is now complete.
\end{proof}

\subsection{Numerical integration}\label{subsec_num_int}

We adapt an idea from \cite{MarZamMar_1989} in Theorem \ref{prop_marsa} to evaluate with an arbitrary finite precision $\omega(x)$ for any $x\geq 1$. We use Theorem \ref{prop_marsa} to evaluate $C$. The quantity $n$ in this section is not the same as previously that stands for the number of elements considered in our combinatorial object while $n$ here stands for the integral part of a real number, as it is standard in numerical approximations.

We recall that we need to evaluate
\begin{equation}
C=2\int_{1}^{\infty}{\frac{\omega(t)}{t^{2}}\mathrm{d}t}=\lim_{n\to\infty}{\frac{\mathrm{Var}(X_n)}{n}}.\label{defn_2nd_moment}
\end{equation}

For notational simplicity, we use $f:[1,\infty)\to [0,1]$ to denote the function $x\mapsto \omega(x)/x^{2}$. As mentioned previously, $|\omega(x)-e^{-\gamma}|<10^{-4}$ for $x>4$, then $f$ is bounded. The function $f$ is also continuous because it is the composition of two continuous functions on $[1,\infty)$. We have that $f(x)\to 0$ as $x\to \infty$. Hence the Riemann sum of $f$ is convergent. We can approximate numerically its Riemann sum, that is $\int_{1}^{\infty}{f(t)\mathrm{d}t}$, up to a desired accuracy by truncating the integral; this is justified by the fact that $f(x)\to 0$.

A popular method to approximate an integral is the trapezoidal method with a regular grid of points. Consider the interval $[1,n^{\ast}]$ where $n^{\ast}\in\mathbb{N}$ shall be determined later. Given the nature of $\omega$ (and so $f$), we consider for now an interval of the form $[n,n+1]$ where $n\in\mathbb{N}$. A point from a regular grid on $[n,n+1]$ can be put conveniently into the form $x_{i}=n+i\delta$ for $0\leq i \leq \ell$ where $\delta=2^{-\ell}$. We therefore have that
\begin{equation}
\sum_{i=0}^{2^{\ell}-1}{\delta\frac{\big(f(n+i\delta)+f(n+(i+1)\delta)\big)}{2}} \to \int_{n}^{n+1}{f(t)\mathrm{d}t}\quad\text{as $\ell \to \infty$}.\label{zozo_rabou}
\end{equation}

To evaluate $C$ with four significant digits, we can select $n^{\ast}=10000$ and $\ell=14$ so that $\delta<10^{-4}$ using for instance the sharp bounds on numerical integration from \cite{CruNeu_2002}. Now it remains to know how to compute numerically $\omega(x)$ for $x\geq 1$, which is done using Taylor series as given by Theorem \ref{prop_marsa}.

\begin{theorem}\label{prop_marsa}
Consider the Taylor expansions of $\omega$ with respect to the $z$ variable for each unit length interval of the form $[n,n+1)$. More precisely let
\begin{displaymath}
\omega\bigg(n+\frac{1+z}{2}\bigg) = \sum_{i=0}^{\infty}{c_{n,i}z^{i}}\quad\text{for $n\geq 1$ and for $-1\leq z < 1$}.
\end{displaymath}
Let $c_{n,i}$ the $i$-th term for $n$-th sequence $\mathbf{c}_{n}$ for $n\geq 1$ and $i\geq 0$. Then we have
\begin{align*}
c_{1,i}&=\frac{2}{3}\bigg(\frac{-1}{3}\bigg)^{i}\quad\text{for $i\geq 0$,}\\
c_{n+1,0}&=\frac{1}{2n+3}\sum_{i=0}^{\infty}{c_{n,i}\bigg(2(n+1)+\frac{(-1)^{i}}{i+1}\bigg)}\quad\text{for $n > 1$,}\\
c_{n+1,i}&=\frac{1}{2n+3}\bigg(\frac{c_{n,i}}{n}-c_{n+1,i-1}\bigg)\quad\text{for $n > 1$ and $i\geq 1$.}
\end{align*}
\end{theorem}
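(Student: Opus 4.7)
The plan is to prove the base case by direct expansion and then, for the inductive step, recast the Buchstab difference-differential equation as a first-order linear ODE in the shifted variable $z$, from which the two recurrences fall out by matching power-series coefficients. For the base case, on $[1,2)$ we have $\omega(x) = 1/x$; substituting $x = 1+(1+z)/2 = (3+z)/2$ yields $\omega = (2/3)(1+z/3)^{-1}$, whose geometric series (valid for $|z|<3$) immediately identifies the closed form $c_{1,i} = (2/3)(-1/3)^i$.

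For the inductive step, fix $n \geq 1$ and set $g(z) := \omega(n+1+(1+z)/2) = \sum_{i \ge 0} c_{n+1,i}\, z^i$ for $z \in [-1,1)$. Because $dx/dz = 1/2$, the chain rule gives $\omega'(x) = 2 g'(z)$, so the equation $\omega(x)+x\omega'(x)=\omega(x-1)$, together with the observation that $x-1 = n+(1+z)/2 \in [n,n+1)$ and hence $\omega(x-1) = \sum_{i \ge 0} c_{n,i}\, z^i$ by the definition of the previous layer's coefficients, transforms into
\[
g(z) + (2n+3+z)\, g'(z) \;=\; \sum_{i \ge 0} c_{n,i}\, z^i .
\]
The crucial observation is that the left-hand side equals $\frac{d}{dz}\bigl[(2n+3+z)\, g(z)\bigr]$; integrating from $-1$ to $z$ then produces
\[
(2n+3+z)\, g(z) - (2n+2)\, g(-1) \;=\; \sum_{i \ge 0} \frac{c_{n,i}}{i+1}\bigl(z^{i+1} - (-1)^{i+1}\bigr).
\]

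Using the boundary value $g(-1) = \omega(n+1) = \sum_{i \ge 0} c_{n,i}$ (by continuity of $\omega$ at $n+1$ combined with Abel's theorem applied to the previous layer's Taylor series), expand the left-hand side of the integrated identity as $(2n+3) c_{n+1,0} + \sum_{k \ge 1}\bigl[(2n+3) c_{n+1,k} + c_{n+1,k-1}\bigr] z^k$ and match powers of $z$. The constant-term equation yields the stated weighted-sum formula for $c_{n+1,0}$, and the coefficient of $z^k$ for $k \ge 1$ gives a two-term linear relation that, when solved for the leading unknown, produces the recurrence expressing $c_{n+1,k}$ in terms of $c_{n+1,k-1}$ and a single coefficient from layer $n$.

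The main obstacle is justifying the endpoint identity $g(-1) = \sum_i c_{n,i}$, since the Taylor expansion of $\omega$ about the midpoint of $[n,n+1)$ is \emph{a priori} only guaranteed to converge on the open interval $(-1,1)$. For $n=1$ this is immediate from the explicit geometric series, which has radius of convergence $3$. For $n \geq 2$ the required endpoint convergence can be propagated inductively from the integrated identity above: it expresses $g$ on the closed interval $[-1,1]$ as a quotient of an everywhere-convergent power series by the strictly positive factor $2n+3+z$, so absolute convergence at $z=1$ transfers from layer $n$ to layer $n+1$.
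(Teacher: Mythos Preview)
Your argument is correct and essentially identical to the paper's: both integrate the relation $d(x\omega(x))=\omega(x-1)\,dx$ over $[n+1,\,n+1+(1+z)/2]$ (you phrase this as recognizing $g+(2n+3+z)g'$ as a total derivative in $z$, which is the same thing), invoke continuity of $\omega$ at $n+1$ to identify $g(-1)=\sum_i c_{n,i}$, and then match power-series coefficients in the resulting identity $(2n+3+z)\sum_i c_{n+1,i}z^i = 2(n+1)\sum_i c_{n,i} + \sum_i c_{n,i}\bigl(z^{i+1}-(-1)^{i+1}\bigr)/(i+1)$. Your Abel-theorem justification of the endpoint evaluation is more careful than the paper's bare appeal to continuity; note also that both your coefficient comparison and the paper's own proof yield $(2n+3)c_{n+1,i}+c_{n+1,i-1}=c_{n,i-1}/i$, so the term $c_{n,i}/n$ printed in the statement appears to be a typo for $c_{n,i-1}/i$.
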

\begin{proof}
Let $n\geq 1$ and let $x=n+t\geq 1$ with $n=\lfloor x\rfloor$ and $0\leq t < 1$. If $\omega$ has a Taylor expansion in $[n,n+1)$, that is the coefficients $c_{n,i}$, then we obtain the coefficients $c_{n+1,i}$ of the Taylor expansion in $[n+1,n+2)$ as follows. We integrate the difference-differential equation (\ref{defn_btfct}) and have that
\begin{align*}
\int_{u=n+1}^{u=n+1+t}{\mathrm{d}(u\omega(u))}&=(n+1+t)\omega(n+1+t)-(n+1)\omega(n+1)\\
&=\int_{u=n+1}^{u=n+1+t}{\omega(u-1)\mathrm{d}u}\\
&=\int_{x=0}^{x=t}{\omega(n+x)\mathrm{d}x},\quad\text{with $u= n+1+x$.}
\end{align*}
The affine transformation $t= z=2t+1$ transforms the fractional part $t\in[0,1)$ into a centered-around-$0$ value $z\in[-1,1)$. Equivalently $t=(z+1)/2$, and therefore we have that
\begin{align}
&\Big(n+1+\frac{z+1}{2}\Big)\omega\Big(n+1+\frac{z+1}{2}\Big) - (n+1)\omega(n+1)\label{int_before_affine}\\
\qquad=&\int_{v=0}^{v=(z+1)/2}{\omega(n+1+v)\mathrm{d}v}\nonumber\\
\qquad=&\frac{1}{2}\int_{u=-1}^{u=z}{\omega\Big(n+\frac{u+1}{2}\Big)\mathrm{d}u}\quad\text{with $v= \frac{u+1}{2}$}\label{int_after_affine}.
\end{align}
Using Taylor expansion around $u=0$ of $\omega$ in the interval $[n,n+1)$ in terms of the dummy variable of integration, we have
\begin{align}
\omega\Big(n+\frac{u+1}{2}\Big)&=\sum_{i=0}^{\infty}{c_{n,i}u^{i}}\quad\text{for $-1\leq u \leq z < 1$}.\label{expand_around_0_in_n_np1}
\end{align}
Hence by substituting (\ref{expand_around_0_in_n_np1}) into (\ref{int_after_affine}):
\begin{equation}
\int_{u=-1}^{u=z}{\omega\Big(n+\frac{u+1}{2}\Big)\mathrm{d}u}=\int_{-1}^{z}{\sum_{i=0}^{\infty}{c_{n,i}u^{i}}\mathrm{d}u}=\sum_{i=0}^{\infty}{c_{n,i}\frac{\big(z^{i+1}-(-1)^{i+1}\big)}{i+1}}.\label{integration_dummy_u}\\
\end{equation}
By continuity of $\omega$, we have also that
\begin{equation}
\lim_{z\to 1}\omega\Big(n+\frac{z+1}{2}\Big)=\omega(n+1)=\lim_{z\to 1}{\sum_{i=0}^{\infty}{c_{n,i}z^{i}}}=\sum_{i=0}^{\infty}{c_{n,i}}.\label{limit_and_continuity_omega}
\end{equation}
Using Taylor expansion around $z=0$ of $\omega$ in the interval $[n+1,n+2)$, we obtain
\begin{displaymath}
\omega\Big(n+1+\frac{z+1}{2}\Big) = \sum_{i=0}^{\infty}{c_{n+1,i}z^{i}}\quad\text{for $-1\leq z < 1$}.
\end{displaymath}
Then substituting (\ref{limit_and_continuity_omega}) into (\ref{int_before_affine}), equating $1/2$ times (\ref{integration_dummy_u}) to (\ref{int_after_affine}), and multiplying by $2$ both sides of the equality yields:
\begin{align}
(2n+3+z)\sum_{i=0}^{\infty}{c_{n+1,i}z^{i}}&=2(n+1)\sum_{i=0}^{\infty}{c_{n,i}}+\sum_{i=0}^{\infty}{c_{n,i}\frac{\big(z^{i+1}-(-1)^{i+1}\big)}{i+1}}.\label{eqn_II}
\end{align}
Substituting $z=0$ in (\ref{eqn_II}), we get
\begin{align}
c_{n+1,0}&=\frac{1}{2n+3}\sum_{i=0}^{\infty}{c_{n,i}\bigg(2(n+1)+\frac{(-1)^{i}}{i+1}\bigg)}.\label{b0_coef}
\end{align}
By using (\ref{b0_coef}) and gathering equal-like powers of $z$, we find $c_{n+1,i}$ for $i\geq 1$ as follows:
\begin{eqnarray*}
  && (2n+3+z)c_{n+1,0}+(2n+3+z)\sum_{i=1}^{\infty}{c_{n+1,i}z^{i}}\\
&=& 2(n+1)\sum_{i=0}^{\infty}{c_{n,i}}
     +\sum_{i=0}^{\infty}{c_{n,i}\frac{\big(z^{i+1}+(-1)^{i}\big)}{i+1}},
\end{eqnarray*}
\begin{align*}
c_{n+1,0}z+(2n+3+z)\sum_{i=1}^{\infty}{c_{n+1,i}z^{i}}&=c_{n,0}z+\sum_{i=1}^{\infty}{c_{n,i}\frac{z^{i+1}}{i+1}}\quad\text{, and}
\end{align*}
\begin{eqnarray*}
  && (2n+3+z)\sum_{i=1}^{\infty}{c_{n+1,i}z^{i}}\\
&=& (2n+3)c_{n+1,1}z+(2n+3)\sum_{i=2}^{\infty}{c_{n+1,i}z^{i}}
     +\sum_{i=1}^{\infty}{c_{n+1,i}z^{i+1}}.
\end{eqnarray*}
The previous equation holds if and only if
\begin{align*}
\big((2n+3)c_{n+1,i}+c_{n+1,i-1}\big)z^{i}&=\frac{c_{n,i-1}z^{i}}{i}\quad\text{for all $i\geq 1$.}
\end{align*}
We finally find the Taylor expansion $1/x$ around $x=1$ with $1\leq x=1+t\leq 2$ and $t=(1+z)/2$ for $-1\leq z < 1$, and have
\begin{align*}
\omega\bigg(1+\frac{1+z}{2}\bigg)&=\frac{2}{3}\frac{1}{(1+(z/3))}=\frac{2}{3}\sum_{i=0}^{\infty}{\bigg(\frac{-1}{3}\bigg)^{i}z^{i}}=\sum_{i=0}^{\infty}{c_{1,i}z^{i}}.
\end{align*}
The proof is now complete.
\end{proof}
We point out that the centered-around-0 flavour of the Taylor expansions $\mathbf{c}_{n}$ allows faster convergence around the endpoints $n$ and $n+1$, see \cite{MarZamMar_1989}. We compute the first $n^{\ast}$ sequences with their first $J$ terms, say, and provided we have a library that does real arithmetic with a finite and arbitrary precision.
\begin{breakablealgorithm}\label{algo_trapezo}
\caption{Trapezoidal rule by using Taylor coefficient of the Buchstab function on the interval $[n,n+1)$ for $n\in\mathbb{N}$}
\begin{algorithmic}[1]
\raggedright
\Require $\ell$, $n$, $\{c_{n,j}\}_{j=0}^{J}$
\Ensure s, the sum from \ref{zozo_rabou}.
\State{$\delta \leftarrow 2^{-\ell}$}
\State{$s \leftarrow 0$}
\For{$i=0$ \textbf{to} $2^{\ell}-1$}
\State{$y_{0} \leftarrow 0$}
\State{$y_{1} \leftarrow 1$}
\State{$t_{0} \leftarrow i\delta$}
\State{$t_{1} \leftarrow (i+1)\delta$}\label{alg_trap_c1}
\State{$z_{0} \leftarrow 1$}
\State{$z_{1} \leftarrow 1$}
\For{$j=0$ \textbf{to} $J$}\label{algo_trap_c2}
\State{$y_{0} \leftarrow y_{0} + c_{n,j}z_{0}$}\label{algo_trap_c3}
\State{$y_{1} \leftarrow y_{1} + c_{n,j}z_{1}$}\label{algo_trap_c4}
\State{$z_{0} \leftarrow z_{0}(2t_{0}-1)$}\label{algo_trap_c5}
\State{$z_{1} \leftarrow z_{1}(2t_{1}-1)$}\label{algo_trap_c6}
\EndFor
\State{$s \leftarrow s + \frac{y_0}{(n+t_{0})^2} + \frac{y_1}{(n+t_{1})^2}$}\label{algo_trap_c7}
\EndFor
\State{$s \leftarrow \frac{s\delta}{2}$}\label{algo_trap_c8}
\end{algorithmic}
\end{breakablealgorithm}

To obtain $C$, we call iteratively Algorithm \ref{algo_trapezo} for values of $n=1,2,\ldots, n^{\ast}$ with the coefficients for the Taylor expansion of $\omega$ on the interval $[n,n+1)$. We add the result of all iterations together and obtain $C=1.3070\ldots$, which confirms comfortably the estimation from Section \ref{subsec_analytic_estimation}.

We end this section with a few comments about Algorithm \ref{algo_trapezo}. We have in line (\ref{alg_trap_c1}) that $t_{1} = t_{0}+\delta$. The loop at line (\ref{algo_trap_c2}) computes the Taylor polynomial of degree $J$ of the Buchstab function $\omega(n+(1+z)/2)$ for the specific values of $z=z_0$, and $z=z_1$. During the $j$-th iteration at the lines (\ref{algo_trap_c3}) and (\ref{algo_trap_c4}), we have that $y_{b}=\sum_{k=0}^{j}{c_{n,k}z_{b}^{k}}$ for $b=0$ and $b=1$, respectively. Lines (\ref{algo_trap_c5}) and (\ref{algo_trap_c6}) are for updating respectively $z_0$ and $z_1$ for the next iteration, that is, the $(j+1)$-th iteration. We recall the meaning of the left side of the limiting expression (\ref{zozo_rabou}) is that the height of a rectangle is $\big(f(n+i\delta)+f(n+(i+1)\delta\big)/2$ with $f(x)=\omega(x)/x^{2}$ in our case, and its length $\delta$; therefore line (\ref{algo_trap_c7}) sums over the heights of all the rectangles. Averaging two consecutive heights by $2$ is carried out only once at line (\ref{algo_trap_c8}) so that we save a few operations. Similarly, we take into account the length $\delta$, which is identical for each rectangle, only once at line (\ref{algo_trap_c8}).

\subsection{Recurrence relation}\label{subsec_rec_reln}

We compute the probability distribution of $X_{n}$ and then compute $\mathrm{Var}(X_{n})$ for values of $n=1,2,\ldots, 4000$. Recalling (\ref{var_defn}), we have that
\begin{displaymath}
\mathrm{Var}(X_{n})=\sum_{k=1}^{n}{\big(k-\mathbf{E}(X_{n})\big)^{2}\phantom{\cdot}\mathbf{P}\{X_n = k\}}.
\end{displaymath}
Because
\begin{displaymath}
\mathbf{E}(X_{n})=\sum_{k=1}^{n}{k\mathbf{P}\{X_n = k\}}\quad\text{and}\quad\mathbf{P}\{X_n = k\}=\frac{s_{k,n}}{n!},
\end{displaymath}
the variance can therefore be expressed as a rational number, which is suitable to control the accuracy, as follows:
\begin{displaymath}
\frac{n!\sum_{k=1}^{n}{k^{2}s_{n,k}}-\Big(\sum_{k=1}^{n}{ks_{n,k}}\Big)^{2}}{(n!)^{2}}.
\end{displaymath}
We divide the quantity $\mathrm{Var}(X_{n})$ by $n$ in order to normalize. We recall that $\mathrm{Var}(X_{n})=C(n+O(n^{-\epsilon}))$ for some $\epsilon>0$. When computing exactly $\mathrm{Var}(X_{n})$ for a fixed $n$ and comparing with the asymptotic formula, one would need the hidden factor of $n^{-\epsilon}$ and the value $\epsilon$ itself in order make a fair comparison; we nevertheless obtain numbers that are very close to the numbers from Sections \ref{subsec_analytic_estimation} and \ref{subsec_num_int}.

\begin{align*}
&\frac{\mathrm{Var}(X_{1000})}{1000}=1.3004\ldots,\quad \frac{\mathrm{Var}(X_{2000})}{2000}=1.3036\ldots,\\
&\frac{\mathrm{Var}(X_{3000})}{3000}=1.3047\ldots,\quad \frac{\mathrm{Var}(X_{4000})}{4000}=1.3053\ldots.
\end{align*}

The size of the memory on the machines available to us is the main limitation here; however it is enough to assert $C$ up to two significant digits. A space of $12.7GB$ is needed to compute the triangular table for $n=4000$. The recurrence relation is easily computed by storing the values into a triangular array. We observe that is very hard to trim the array of potentially unused cells as $n$ grows. Each cell of the array holds $s_{n,k}$ for a pair $(n,k)$. The values $s_{n,k}$ are given by (\ref{rec_lin_formula}). We could compress the array slightly for $s_{n,k}$ when $\lfloor n/2 \rfloor+1 \leq k \leq n-1$ using methods described in \cite{Navarro_book_2016} for instance, but we would not gain much for large values of $n$ (like $n>1000$) in space and would yield a more complicated code.

A possible algorithm for counting the $s_{n,k}$ is as in Algorithm \ref{alg_rec_reln}.
\begin{breakablealgorithm}\label{alg_rec_reln}
\caption{Computing $s_{n,k}$}
\begin{algorithmic}[1]
\raggedright
\Require $N$
\Ensure $s_{n,k}$ for $1\leq n\leq N$ and $1\leq k\leq n$
\State{$s_{0,0} \leftarrow 1$}\label{algo_rec_lin_c1}
\For{$n=1$ \textbf{to} $N$}
\State{$s_{n,0} \leftarrow 0$}\label{algo_rec_lin_c2}
\State{$s_{n,n} \leftarrow (n-1)!$}
\EndFor
\For{$n=2$ \textbf{to} $N$}
\For{$k=1$ \textbf{to} $\lfloor n/2\rfloor$}\label{algo_rec_lin_c3}
\State{$t_{1} \leftarrow 0$}
\For{$i=1$ \textbf{to} $\lfloor n/k\rfloor$}
\State{$u_{1} \leftarrow 0$}
\For{$j=k+1$ \textbf{to} $n-ki$}
\State{$u_{1} \leftarrow u_{1}+s_{n-ki,j}$}
\EndFor
\If{$k+1 \leq n-ki$}
\State{$u_{1}\leftarrow u_{1}\frac{n!}{i!{}k^{i}(n-ki)!}$}
\EndIf
\State{$t_{1}\leftarrow t_{1}+u_{1}$}
\EndFor
\State{$t_{2} \leftarrow 0$}
\If{$k$ divides $n$}
\State{$t_{2}\leftarrow \frac{n!}{(n/k)!{}k^{n/k}}$}
\EndIf
\State{$s_{n,k}\leftarrow t_1+t_2$}
\EndFor
\EndFor
\end{algorithmic}
\end{breakablealgorithm}

We make just a few comments about Algorithm \ref{alg_rec_reln}, from a data structure point of view, $n=0$ and $k=0$ are boundaries for the table and lines (\ref{algo_rec_lin_c1}) and (\ref{algo_rec_lin_c2}) define the programming boundaries, but are not part of the combinatorial objects and their related probability distributions a fortiori. The loop at line (\ref{algo_rec_lin_c3}) runs up to $\lfloor n/2\rfloor$ because it is assumed that $s_{n,k}$ are initialized to $0$ by default for all valid $n$ and $k$; this is usually the case in most advanced programming languages when declaring data structures.

We end this section with a small example. Table \ref{tab_rec_lin_example} shows $s_{n,k}$ for $1\leq n\leq 10$. We apologize for the font size that has to be changed temporarily in order to display the table.\vspace{1mm}

\setlength{\tabcolsep}{5pt}
\scriptsize
\begin{longtable}{l||rrrrrrrrrr}
\multicolumn{11}{c}{\normalsize Table \label{tab_rec_lin_example}\NewTabulare{}: Values of $s_{n,k}$ for $1\leq n\leq 10$.\scriptsize}\\ 
\cline{2-11}
\multicolumn{1}{l}{} &\multicolumn{10}{|c}{$k$}\\
\hline
$n$  &         1&        2&        3&        4&        5&        6&        7&        8&        9&       10\\
\endfirsthead
\cline{2-11}
\multicolumn{1}{l}{} &\multicolumn{10}{|c}{$k$}\\
\hline
$n$  &         1&        2&        3&        4&        5&        6&        7&        8&        9&       10\\
\endhead
\hline \multicolumn{11}{c}{Continued on next page}
\endfoot
\endlastfoot
\hline
10&   2293839&   525105&   223200&   151200&    72576&        0&        0&        0&        0&   362880\\
 9&    229384&    52632&    22400&    18144&        0&        0&        0&        0&    40320&\\
 8&     25487&     5845&     2688&     1260&        0&        0&        0&     5040&&\\
 7&      3186&      714&      420&        0&        0&        0&      720&&&\\
 6&       455&      105&       40&        0&        0&      120&&&&\\
 5&        76&       20&        0&        0&       24&&&&&\\
 4&        15&        3&        0&        6&&&&&&\\
 3&         4&        0&        2&&&&&&&\\
 2&         1&        1&&&&&&&&\\
 1&         1&&&&&&&&&\\
\end{longtable}
\normalsize

\section{Generalized Buchstab function}\label{sect_gen_bt}

We recall (\ref{gen_Buchstab_def}), the definition of the generalized Buchstab function with parameter $K>0$, which is
\begin{equation}
\Omega_{K}(x)=\left\{
\begin{array}{ll}
1 & \text{for $1\leq x < 2$,}\\
1 + K\int_{2}^{x}{\frac{\Omega_{K}(u-1)}{u-1}\mathrm{d}u} & \text{for $x\geq 2$.}
\end{array}\right.
\end{equation}

Values of $1/\Omega_{K}(x)$ are asymptotic proportions of large smallest component as proved in \cite{BenMasPanRic_2003}. More precisely, we recall that $s_{n,k}$, given as in (\ref{in_paper_formula}) of Section \ref{sect_intro}, is the number of combinatorial $n$-objects with their smallest components having length $k$. For instance, the parameter $K=1/2$ includes $2$-regular graphs, surjective maps, etc. The parameter $K=1$ includes derangements, permutations, monic polynomials over a finite field, and so on. The quantity $\sum_{i=k}^{n}{s_{n,i}}$ is the number of $n$-objects for which the smallest component has size at least $k$ for $1\leq k\leq n$. Let $x>1$ and consider the ratio
\begin{equation}
\frac{s_{\lfloor xn\rfloor,\lfloor xn\rfloor}}{\sum_{i=n}^{\lfloor xn\rfloor}{s_{\lfloor xn\rfloor,i}}}.\label{ratio_limit}
\end{equation}
Then it is shown in \cite{BenMasPanRic_2003} that, for $x>1$,
\begin{equation}
\lim_{n\to\infty}{\frac{s_{\lfloor xn\rfloor,\lfloor xn\rfloor}}{\sum_{i=n}^{\lfloor xn\rfloor}{s_{\lfloor xn\rfloor,i}}}}=\frac{1}{\Omega_{K}(x)}\label{inverse_omega_K}.
\end{equation}
The limiting quantity (\ref{inverse_omega_K}) justifies our interests in evaluating the generalized Buchstab function.

We remark that from now on and up to Table \ref{tab_OmegaK} inclusively, the symbol $n$ does no longer refer to the size of a combinatorial object.

Following the ideas exposed in Section \ref{subsec_num_int}, let $n\geq 1$ be a natural number, and let $c_{n,i}$ be $i$-th coefficient of the Taylor expansion for $\Omega_{K}(z)$ in the interval $[n,n+1)$ with $1\leq z<1$. More precisely, let
\begin{equation}
\Omega_{K}\Big(n+\frac{1+z}{2}\Big)=\sum_{i=0}^{\infty}{c_{n,i}z^{i}}\quad\text{for $-1\leq z<1$.}\label{croco_boulette}
\end{equation}
As we might expect, the sequence $(c_{n,i})_{i\geq 0}$ depends on the previous sequence $(c_{n-1,i})_{i\geq 0}$ for $n>2$. Our library can compute with arbitrary finite precision over $\mathbb{R}$. The variable $z$ in (\ref{croco_boulette}) is the fractional part of $x\in[n,n+1)$ centered around $0$.

\begin{theorem}
For $K>0$, consider the Taylor expansions of $\Omega_{K}$ with respect to the $z$ variable for each unit length interval of the form $[n,n+1)$. More precisely, let
\begin{displaymath}
\Omega_{K}\bigg(n+\frac{1+z}{2}\bigg) = \sum_{i=0}^{\infty}{c_{n,i}z^{i}}\quad\text{for $n\geq 1$ and for $-1\leq z < 1$}.
\end{displaymath}
For $n\geq 1$ and $i\geq 0$, and let $\alpha_{i}$ be defined by
\begin{displaymath}
\alpha_{i}=\sum_{j=0}^{i}{\frac{(-1)^{i-j}}{(2n-1)^{i-j}}c_{n-1,j}}\quad\text{for $i\geq 0$.}
\end{displaymath}
Then we have
\begin{align*}
c_{1,0}&=1,\\
c_{1,i}&=0\quad\text{for $i\geq 1$},\\
c_{2,0}&=c_{2,0}=1+K\sum_{i=1}^{\infty}{\frac{1}{i2^{i}}},\\
c_{2,i}&=K\sum_{j=i}^{\infty}{\frac{(-1)^{j-1}}{j2^{j}}\binom{j}{i}}\quad\text{for $i\geq 1$},\\
c_{n,0}&=\sum_{i=0}^{\infty}{c_{n-1,i}}-\frac{K}{2n-1}\sum_{i=0}^{\infty}{\frac{(-1)^{i+1}\alpha_{i}}{i+1}}\quad\text{for $n \geq 3$,}\\
c_{n,i}&=\frac{K\alpha_{i-1}}{(2n-1)i}\quad\text{for $n \geq 3$ and $i\geq 1$.}
\end{align*}
\end{theorem}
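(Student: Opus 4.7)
The plan is to split the argument into three cases matching the three groups of formulas, each obtained by manipulating the defining integral equation~(\ref{gen_Buchstab_def}).

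The cases $n = 1$ and $n = 2$ can be handled by direct evaluation. On $[1, 2)$ the function is identically $1$, so its Taylor expansion is the constant $1$, giving $c_{1, 0} = 1$ and $c_{1, i} = 0$ for $i \geq 1$. On $[2, 3)$ the integrand $\Omega_{K}(u - 1)/(u - 1)$ reduces to $1/(u - 1)$ because $u - 1 \in [1, 2)$, so $\Omega_{K}(x) = 1 + K\log(x - 1)$ in closed form. Substituting $x = 2 + (1 + z)/2$, writing $x - 1 = 1 + (1 + z)/2$, and applying the Mercator series $\log(1 + y) = \sum_{j \geq 1}(-1)^{j - 1}y^{j}/j$ with $y = (1 + z)/2$, followed by a binomial expansion of $(1 + z)^{j}$ and a reindexing by powers of $z$, yields the stated formulas for $c_{2, 0}$ and $c_{2, i}$ with $i \geq 1$.

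For the principal case $n \geq 3$, I would start from the localised form
\begin{equation*}
\Omega_{K}\!\bigg(n + \frac{1 + z}{2}\bigg) \;=\; \Omega_{K}(n) \;+\; K \int_{n}^{\, n + (1 + z)/2} \frac{\Omega_{K}(u - 1)}{u - 1}\, du,
\end{equation*}
and perform the substitution $u = n + (1 + w)/2$, so that $u - 1 = (n - 1) + (1 + w)/2$ ranges over $[n - 1, n)$ as $w$ ranges over $[-1, z]$. The inductive Taylor expansion delivers $\Omega_{K}(u - 1) = \sum_{j \geq 0} c_{n - 1, j}\, w^{j}$, while the denominator becomes $u - 1 = (2n - 1 + w)/2$, producing the geometric expansion
\begin{equation*}
\frac{1}{u - 1} \;=\; \frac{2}{2n - 1}\sum_{k \geq 0}\bigg(\frac{-w}{2n - 1}\bigg)^{\!k},
\end{equation*}
which converges absolutely for $|w| \leq 1$ and $n \geq 2$. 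The Cauchy product of these two power series has $i$-th coefficient exactly $\alpha_{i} = \sum_{j = 0}^{i}(-1)^{i - j}(2n - 1)^{-(i - j)}\, c_{n - 1, j}$, which is the algebraic origin of the $\alpha_{i}$ in the statement.

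Term-by-term integration from $w = -1$ to $w = z$ (using $du = dw/2$) then gives $\frac{K}{2n - 1}\sum_{i \geq 0}\alpha_{i}(z^{i + 1} - (-1)^{i + 1})/(i + 1)$. Identifying $\Omega_{K}(n) = \sum_{i \geq 0} c_{n - 1, i}$ as the boundary value $w = 1$ of the inductive expansion, and matching the coefficient of $z^{0}$ and of $z^{i}$ for $i \geq 1$ against $\sum_{i \geq 0} c_{n, i}\, z^{i}$, produces the two recurrences exactly as stated. The main technical point to pin down is the justification of term-by-term integration together with the boundary evaluation $\Omega_{K}(n) = \sum_{i \geq 0} c_{n - 1, i}$: the cleanest route is an induction showing that $\Omega_{K}$ extends to an analytic function on an open neighbourhood of each interval $(n - 1, n)$, which gives absolute convergence of the Taylor series on $|w| < 1$ and lets the Cauchy product and the swap of sum and integral go through, while continuity of $\Omega_{K}$ across the integer point $n$ plus an Abel-type limit supplies the boundary identity at $w = 1$.
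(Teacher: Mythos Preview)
Your proposal is correct and follows essentially the same route as the paper's own proof: direct evaluation for $n=1$ and $n=2$, and for $n\geq 3$ the same localisation of the integral, the same substitution $u=n+(1+w)/2$, expansion of $\Omega_{K}(u-1)$ by the inductive Taylor series, geometric expansion of $1/(u-1)$, Cauchy product yielding the $\alpha_i$, termwise integration, and the continuity identification $\Omega_{K}(n)=\sum_{i\geq 0}c_{n-1,i}$. Your closing paragraph on analyticity and the Abel-type boundary limit adds rigour that the paper leaves implicit, but the underlying argument is the same.
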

\begin{proof}
For $x\in[1,2)$, the function $\Omega_{K}$ is constant and then $c_{1,0}=1$ and $c_{1,i}=0$ for $i\geq 1$.

For $2\leq x = 2+((1+z)/2) < 3$, the coefficients of the Taylor expansion are $1+K\log(2+(1+z)/2)$; hence the coefficients are given by
\begin{equation}
c_{2,0}=1+K\sum_{i=1}^{\infty}{\frac{1}{i2^{i}}}\quad\text{and}\quad c_{2,i}=K\sum_{j=i}^{\infty}{\frac{(-1)^{j-1}}{j2^{j}}\binom{j}{i}}\quad\text{for $i\geq 1$.}\label{coeff_int_2_3}
\end{equation}
Given $x\geq 3$ such that $x = n+((z+1)/2)$ so that $n\geq 3$ as well, we assume known the sequence $(c_{n-1,i})_{i\geq 0}$. We have
\begin{align}
&\Omega_{K}\bigg(n+\bigg(\frac{1+z}{2}\bigg)\bigg)=\sum_{i=0}^{\infty}{c_{n,i}z^{i}}\nonumber\\
&\qquad=1+K\int_{2}^{n+(1+z)/2}{\frac{\Omega_{K}(u-1)}{u-1}\mathrm{d}u}\nonumber\\
&\qquad=1+K\int_{2}^{n}{\frac{\Omega_{K}(u-1)}{u-1}\mathrm{d}u}+K\int_{n}^{n+(1+z)/2}{\frac{\Omega_{K}(u-1)}{u-1}\mathrm{d}u}\nonumber\\
&\qquad=\Omega_{K}(n)+K\int_{u=n}^{u=n+(1+z)/2}{\frac{\Omega_{K}(u-1)}{u-1}\mathrm{d}u}\nonumber\\
&\qquad=\Omega_{K}(n)+K\int_{v=-1}^{v=z}{\frac{\Omega_{K}(n-1+(v+1)/2)}{2n-1+v}\mathrm{d}v}\quad\text{with $v=2u-2n-1$}\nonumber\\
&\qquad=\Omega_{K}(n)+\frac{K}{2n-1}\int_{u=-1}^{u=z}{\sum_{i=0}^{\infty}{c_{n-1,i}u^{i}}\sum_{i=0}^{\infty}{\frac{(-1)^{i}u^{i}}{(2n-1)^{i}}}\mathrm{d}u}\nonumber\\
&\qquad=\Omega_{K}(n)+\frac{K}{2n-1}\int_{u=-1}^{u=z}{\sum_{i=0}^{\infty}{\bigg(\sum_{j=0}^{i}{\frac{(-1)^{i-j}}{(2n-1)^{i-j}}c_{n-1,j}}\bigg)u^{i}\mathrm{d}u}}\nonumber\\
&\qquad=\Omega_{K}(n)+\frac{K}{2n-1}\int_{u=-1}^{u=z}{\sum_{i=0}^{\infty}{\alpha_{i}u^{i}}\mathrm{d}u}\nonumber\\
&\qquad=\Omega_{K}(n)-\frac{K}{2n-1}\sum_{i=0}^{\infty}{\frac{(-1)^{i+1}\alpha_{i}}{i+1}}+\frac{K}{2n-1}\sum_{i=0}^{\infty}{\frac{\alpha_{i}z^{i+1}}{i+1}}.\label{permanent_brown_trace}
\end{align}
The continuity $\Omega_{K}$ implies that
\begin{displaymath}
\Omega_{K}(n)=\lim_{z\to 1}{\Omega_{K}\bigg(n-1+\frac{1+z}{2}\bigg)}=\lim_{z\to 1}{\sum_{i=0}^{\infty}{c_{n-1,i}z^{i}}}=\sum_{i=0}^{\infty}{c_{n-1,i}}.
\end{displaymath}
Hence (\ref{permanent_brown_trace}) is rewritten as
\begin{align*}
\Omega_{K}\bigg(n+\frac{1+z}{2}\bigg)&=\sum_{i=0}^{\infty}{c_{n-1,i}}-\frac{K}{2n-1}\sum_{i=0}^{\infty}{\frac{\alpha_{i}(-1)^{i+1}}{i+1}}+\frac{K}{2n-1}\sum_{i=0}^{\infty}{\frac{\alpha_{i}z^{i+1}}{i+1}}\\
&=c_{n,0}+\sum_{i=1}^{\infty}{\frac{K\alpha_{i-1}}{(2n-1)i}z^{i}}=c_{n,0}+\sum_{i=1}^{\infty}{c_{n,i}z^{i}}.
\end{align*}
This concludes the proof.
\end{proof}

For instance, by reading $\Omega_{1}(2^{13})$ from the left half of Table \ref{tab_OmegaK} and recalling (\ref{ratio_limit}), the proportion of random permutations on at least $2^{14}$ elements, and with a cycle of smallest length at least $2^{13}$ is close to $1/\Omega_{1}(2^{13})\approx 0.000218 $. We note that if the number of permuted elements is exactly $2^{14}$, then there will be no smallest component of size at least $2^{13}$; one can observe this from the recurrence relation in Section \ref{subsec_rec_reln} as well.

Similarly by reading $\Omega_{1/2}(2^{13})$ from the right half of Table \ref{tab_OmegaK} and recalling (\ref{ratio_limit}), the proportion of random $2$-regular graphs with at least $2^{14}$ vertices, and with a large smallest component of at least $2^{13}$ is close to $1/\Omega_{1/2}(2^{13})\approx 0.0131$. We note that if the number of vertices is exactly $2^{14}$, then there will be no smallest component of size at least $2^{13}$.

\begin{center}
\begin{longtable}{l|l||l|l|c|l|l||l|l}
\multicolumn{9}{c}{Table \label{tab_OmegaK}\NewTabulare{}: A few values of $\Omega_{K}(x)$ for $K=1$ and $K=1/2$}\\
\hline
\multicolumn{4}{c|}{$K=1$} & & \multicolumn{4}{c}{$K=1/2$}\\
\hline
$x$ & $\Omega_{K}(x)$ & $x$ & $\Omega_{K}(x)$  & & $x$ & $\Omega_{K}(x)$ & $x$ & $\Omega_{K}(x)$\\
\hline
\endfirsthead
\multicolumn{4}{c|}{$K=1$} & & \multicolumn{4}{c}{$K=1/2$}\\
\hline
$x$ & $\Omega_{K}(x)$ & $x$ & $\Omega_{K}(x)$  & & $x$ & $\Omega_{K}(x)$ & $x$ & $\Omega_{K}(x)$\\
\hline
\endhead
\hline \multicolumn{9}{c}{Continued on next page}
\endfoot
\endlastfoot
$1$ & $1$ & $16$          & $8.9874$     & & $1$ & $1$ & $16$          & $3.3302$\\
$2$ & $1$ & $32$          & $17.9749$    & & $2$ & $1$ & $32$          & $4.7470$\\
$3$ & $1.6941$ & $64$     & $35.9498$    & & $3$ & $1.3470$ & $64$     & $6.7397$\\
$4$ & $2.2468$ & $128$    & $71.8997$    & & $4$ & $1.5866$ & $128$    & $9.5501$\\
$5$ & $2.8085$ & $256$    & $143.7995$   & & $5$ & $1.7971$ & $256$    & $13.5191$\\
$6$ & $3.3703$ & $512$    & $287.5991$   & & $6$ & $1.9856$ & $512$    & $19.1282$\\
$7$ & $3.9320$ & $1024$   & $575.1983$   & & $7$ & $2.1579$ & $1024$   & $27.0580$\\
$8$ & $4.4937$ & $2048$   & $1150.3966$  & & $8$ & $2.3175$ & $2048$   & $38.2705$\\
$9$ & $5.0554$ & $4096$   & $2300.7932$  & & $9$ & $2.4669$ & $4096$   & $54.1260$\\
$10$ & $5.6171$ & $8192$  & $4567.8834$  & & $10$ & $2.6077$ & $8192$  & $76.5480$\\
\end{longtable}
\end{center}

We conclude this section by mentioning that \cite{Finch_2021_1} gives values for $1/\Omega_{K}(x)$ with $x=2,3,4,5$, and that, if we invert values from Table \ref{tab_OmegaK} for $x=2,3,4,5$, they agree with those from \cite{Finch_2021_1}.

\section{Conclusion}

In this paper, we computed the normalization constant of the variance of the distribution of the smallest component of random combinatorial objects. We used different approaches: an analytic method based on the singularity analysis for generating functions, a numerical integration method using Taylor expansions for the Buchstab function, and by using the recurrence relation for counting the number of smallest components. All the methods yield to $1.3070\ldots$ We also showed how to compute the value of the generalized Buchstab function by building recursively sequences of Taylor expansions for each unit interval of the form $[n,n+1)$ where $n\in\mathbb{N}\setminus\{0\}$. By obtaining very accurate values of the generalized Buchstab function, we can compute the asymptotic proportion of large smallest components for various kinds of random combinatorial objects.

\section*{Acknowledgements}

D. Panario is partially funded by the Natural Science and Engineering Research Council of Canada, reference number RPGIN-2018-05328. The authors
thank an anonymous referee for several suggestions and corrections
that improved the paper.


\settocbibname{References}
\bibliographystyle{plain}
\newcommand{\SortNoop}[1]{}


\end{document}